\newtheorem{theorem}{Theorem}
\newtheorem{definition}{Definition}
\newtheorem{example}{Example}
\newdimen\bibindent
\newdimen\betweenumberspace    
\newdimen\headlineindent            
\def\at{\\}
\def\and{\\}
\def\keywords#1{\textbf{Keywords:}\ #1}
\def\institute#1{\def\thefootnote{\ }\protect\footnotetext[1]{\hspace*{-.5cm}#1}\def\thefootnote{\arabic{footnote}}} 
\def\subjclass#1{\textbf{AMS subject classifications:}\ #1}
\begin{document}

\title{A Novel Homotopy Perturbation Sumudu Transform Method for \\Nonlinear Fractional PDEs: Applications and Comparative Analysis}
\author{Maryam Jalili$^*$ }

\institute{$^*$Corresponding author\\
Maryam Jalili
\at Department of Mathematics, Ne.C, Islamic Azad University, Neyshabur, Iran.  e-mail: mrmjlili@iau.ac.ir\\
}
\date{}
\maketitle
\begin{abstract}

This study introduces the Homotopy Perturbation Sumudu Transform Method (HPSTM), a novel hybrid approach that combines the Sumudu transform with homotopy perturbation to solve nonlinear fractional partial differential equations (FPDEs). The method addresses equations such as the fractional porous medium, heat transfer, and Fisher equations, employing the Caputo fractional derivative. HPSTM leverages the Sumudu transform’s ability to preserve linearity and the flexibility of homotopy perturbation, achieving faster convergence than the Laplace-HPM and Elzaki-HPM methods for strongly nonlinear FPDEs. Series solutions yield absolute errors as low as $3.12 \times 10^{-3}$ for $\alpha = 0.9$, while computational times average 0.5 seconds per example using 5 series terms on standard hardware. Solutions are validated against exact solutions and compared with the Adomian Decomposition Method (ADM), the radial basis function (RBF) meshless method, the Variational Iteration Method (VIM), the Finite Difference Method (FDM), and a spectral method. Numerical examples, sensitivity analysis, and graphical representations for $\alpha = 1.0, 0.9, 0.8, 0.7$ confirm HPSTM's accuracy, efficiency, and robustness. Limitations include challenges with high-order nonlinearities and multi-dimensional domains. HPSTM shows promise for applications in modeling fluid flow in porous media, heat conduction in complex materials, and biological population dynamics.

\end{abstract}

\subjclass{35R11, 44A10, 65M99, 34A08.}

\keywords{ Homotopy Perturbation Sumudu Transform Method; Adomian decomposition method; Meshless methods, Variational iteration method; Finite difference method.}

\section{Introduction}
\label{sec:introduction}
Fractional partial differential equations (FPDEs) are powerful tools for modeling complex systems with non-local and memory-dependent behaviors, which appear in fields such as fluid dynamics~\cite{ref1,ref2,ref3,ref4}, heat transfer in fractal media~\cite{ref5,ref6}, and biological population dynamics~\cite{ref7,ref8}. Unlike integer-order PDEs, FPDEs incorporate fractional derivatives (such as Caputo or Riemann--Liouville) to capture anomalous diffusion and long-range interactions~\cite{ref9,ref10}. Solving these equations analytically or numerically is challenging due to their non-local nature and nonlinearity.

Several methods have been developed to address FPDEs, including the homotopy perturbation method (HPM)~\cite{ref11}, the variational iteration method (VIM)~\cite{ref12}, the Adomian decomposition method (ADM)~\cite{ref13}, the finite difference method (FDM)~\cite{ref14}, radial basis function (RBF) meshless methods~\cite{ref15}, and spectral methods~\cite{ref16}. Integral transforms such as Laplace, Sumudu, and Elzaki simplify fractional derivatives by converting differential equations into algebraic equations~\cite{ref17,ref18,ref19}. However, each method has limitations: HPM struggles with high-order nonlinearities, FDM requires fine grids for accuracy, and Laplace-based methods involve complex inversion.

We propose the Homotopy Perturbation Sumudu Transform Method (HPSTM), a hybrid approach that combines the Sumudu transform’s simplicity with HPM’s ability to handle nonlinearities. HPSTM offers faster convergence than Laplace-HPM and Elzaki-HPM~\cite{ref20,ref21,ref22}, avoids complex pole analysis, and maintains stability for Caputo derivatives. We apply HPSTM to three FPDEs: the fractional porous medium equation (modeling groundwater flow), the fractional heat transfer equation (describing anomalous conduction), and the fractional Fisher equation (representing population dynamics). These equations have gained considerable interest because of their applications in various sciences~\cite{ref23,ref24,ref25,ref26,ref27}. Comparisons with ADM, RBF, VIM, FDM, and spectral methods demonstrate HPSTM’s superior accuracy and efficiency.

To facilitate the development and application of HPSTM, we begin by presenting the key mathematical definitions relevant to fractional calculus and the Sumudu transform.

\subsection*{Basic Definitions and Preliminaries}
This section defines the mathematical foundations of HPSTM, including the Sumudu transform, Caputo fractional derivative, and Riemann-Liouville fractional integral.

\begin{definition}\label{def:1}
The Sumudu transform on the set
\[
A = \left\{ f(t) \mid \exists M, t_1, t_2 > 0, |f(t)| < M e^{\frac{|t|}{t_j}}, t \in (-1)^j \times [0,\infty) \right\}
\]
is defined as:
\begin{equation*}
S[f(t)] = F(u) = \frac{1}{u} \int_0^\infty f(t) e^{\left(-\frac{t}{u}\right)} dt, \quad u \in (-t_1, t_2).
\end{equation*}
The inverse Sumudu transform is:
\begin{equation*}
S^{-1}[F(u)] = \frac{1}{2\pi i} \int_{c-i\infty}^{c+i\infty} \frac{1}{u} F(u) e^{\left(\frac{t}{u}\right)} du = f(t), \quad t > 0.
\end{equation*}
\end{definition}
\begin{table}[!htp]
\begin{center}
\caption{Sumudu Transform for Some Functions}
\label{tab:sumudu}
\begin{tabular}{|c|c|c|}
\hline
$f(t)$ & $S[f(t)] = F(u)$ \\
\hline
$1$ & $1$ \\
$t^a$ & $\Gamma(a+1)u^a$ \\
$e^{at}$ & $\frac{1}{1 - au}$ \\
$\sin(at)$ & $\frac{au}{1 + a^2 u^2}$ \\
$\cos(at)$ & $\frac{1}{1 + a^2 u^2}$ \\
$\sinh(at)$ & $\frac{au}{1 - a^2 u^2}$ \\
$\cosh(at)$ & $\frac{1}{1 - a^2 u^2}$ \\
$f'(t)$ & $\frac{F(u)}{u} - \frac{f(0)}{u}$ \\
$\int_0^t f(\tau) d\tau$ & $u F(u)$ \\
$f^{(n)}(t)$ & $\frac{F(u)}{u^n} - \sum_{k=0}^{n-1} \frac{f^{(k)}(0)}{u^{n-k}}$ \\
$\alpha f(t) + \beta g(t)$ & $\alpha F(u) + \beta G(u)$ \\
$(f * g)(t)$ & $u S[f(t)] S[g(t)]$ \\
\hline
\end{tabular}
\end{center}
\end{table}

\begin{definition}\label{def:2}
The Sumudu transform of the Caputo fractional derivative of order $\alpha$ for $f(t)$ is:
\begin{equation*}
S[D_t^\alpha f(t)] = u^{-\alpha} S[f(t)] - \sum_{k=0}^{m-1} u^{(-\alpha + k)} f^{(k)}(0^+), \quad m-1 < \alpha \leq m,
\end{equation*}
where $m$ is a positive integer, and $f^{(k)}(0^+)$ are initial conditions.
\end{definition}

\begin{definition}\label{def:3}
A real function $f(t)$, $t > 0$, belongs to the space $C_\mu$ if there exists a real number $p > \mu$ such that $f(t) = t^p g(t)$, where $g(t)$ is continuous. It belongs to $C_\mu^m$ if $f^{(m)}(t) \in C_\mu$, where $m \in \mathbb{N} \cup \{0\}$. The space $C_\mu$ ensures integrability for fractional operators.
\end{definition}

\begin{definition}\label{def:4}
The Riemann-Liouville fractional integral operator of order $\alpha$ for $f(t) \in C_\mu$, $\mu \geq -1$, is:
\begin{equation*}
J^\alpha f(t) = \frac{1}{\Gamma(\alpha)} \int_0^t (t - \tau)^{\alpha - 1} f(\tau) d\tau, \quad \alpha > 0.
\end{equation*}
\end{definition}

The Riemann-Liouville operator $J^\alpha$ satisfies:
\begin{enumerate}
    \item Linearity: $J^\alpha (a f(t) + b g(t)) = a J^\alpha f(t) + b J^\alpha g(t)$, where $a$ and $b$ are constants.
    \item Composition: $J^\alpha J^\beta f(t) = J^{\alpha + \beta} f(t)$ for $\alpha, \beta > 0$.
    \item Commutativity: $J^\alpha J^\beta f(t) = J^\beta J^\alpha f(t)$.
    \item Identity: $J^0 f(t) = f(t)$.
    \item Action on power functions: $J^\alpha t^\gamma = \frac{\Gamma(\gamma + 1)}{\Gamma(\gamma + \alpha + 1)} t^{\gamma + \alpha}$ for $\gamma > -1$.
\end{enumerate}

\begin{definition}\label{def:5}
The Caputo fractional derivative of order $\alpha$ for $f(t) \in C^{m-1}$, $t > 0$, $m \in \mathbb{N}$, is:
\begin{equation*}
D^\alpha f(t) = J^{m - \alpha} D^m f(t) = \frac{1}{\Gamma(m - \alpha)} \int_0^t (t - x)^{m - \alpha - 1} f^{(m)}(x) dx, \quad m - 1 \leq \alpha \leq m.
\end{equation*}
\end{definition}

The Caputo derivative $D^\alpha$ satisfies:
\begin{enumerate}
    \item $D^\alpha J^\alpha f(t) = f(t)$.
    \item $J^\alpha D^\alpha f(t) = f(t) - \sum_{k=0}^{m-1} f^{(k)}(0^+) \frac{t^k}{\Gamma(k + 1)}, \quad m > 0$.
\end{enumerate}

\vspace{0.5em}
The paper is organized as follows: Section~\ref{sec:methodology} elaborates on HPSTM, ADM, and RBF methodologies, including convergence analyses. Section~\ref{sec:results} provides numerical simulations, error evaluations, sensitivity analysis, and comparisons for three examples. Section~\ref{sec:limitations} discusses limitations and future research directions. Section~\ref{sec:conclusion} summarizes contributions and applications

\section{Methodology}
\label{sec:methodology}

\subsection{Basic Idea of HPSTM}
Consider the nonlinear fractional partial differential equation:
\begin{equation}
D_t^\alpha u(x,t) + R u(x,t) + N u(x,t) = f(x,t),
\label{eq:fpde}
\end{equation}
with initial condition:
\begin{equation}
u(x,0) = g(x),
\label{eq:initial}
\end{equation}
where $D_t^\alpha$ is the Caputo fractional derivative, $R$ is a linear operator, $N$ is a nonlinear operator, and $f(x,t)$ is the source term. Applying the Sumudu transform to both sides of \eqref{eq:fpde}:
\begin{equation*}
S[D_t^\alpha u(x,t)] + S[R u(x,t)] + S[N u(x,t)] = S[f(x,t)].
\end{equation*}
Using Definition \ref{def:2} and \eqref{eq:initial}, we obtain:
\begin{equation*}
S[u(x,t)] = g(x) + u^\alpha S[f(x,t)] - u^\alpha S[R u(x,t) + N u(x,t)].
\end{equation*}
Applying the inverse Sumudu transform:
\begin{equation}
u(x,t) = G(x,t) - S^{-1} \left\{ u^\alpha S[R u(x,t) + N u(x,t)] \right\},
\label{eq:inverse}
\end{equation}
where $G(x,t)$ incorporates the source term and initial conditions. The Homotopy Perturbation Method (HPM) assumes a series solution:
\begin{equation}
u(x,t) = \sum_{n=0}^\infty p^n u_n(x,t),
\label{eq:series}
\end{equation}
where $p \in [0,1]$ is the homotopy parameter. The nonlinear term is expressed using He's polynomials:
\begin{equation}
N u(x,t) = \sum_{n=0}^\infty p^n H_n(u),
\label{eq:he}
\end{equation}
where He's polynomials $H_n(u)$ are computed as:
\begin{equation*}
H_n(u_0, u_1, \dots, u_n) = \frac{1}{\Gamma(n + 1)} \left. \frac{\partial^n}{\partial p^n} \left\{ N\left[ \sum_{k=0}^\infty p^k u_k(x,t) \right] \right\} \right|_{p=0}.
\end{equation*}
Substituting \eqref{eq:series} and \eqref{eq:he} into \eqref{eq:inverse}:
\begin{equation*}
\sum_{n=0}^\infty p^n u_n(x,t) = G(x,t) - S^{-1} \left\{ u^\alpha S\left[ R\left( \sum_{n=0}^\infty p^n u_n(x,t) \right) + \sum_{n=0}^\infty p^n H_n(u) \right] \right\}.
\end{equation*}
Equating coefficients of $p$ yields:
\begin{equation*}
\begin{aligned}
p^0 &: u_0 = G(x,t), \\
p^1 &: u_1 = S^{-1} \left\{ u^\alpha S\left( R[u_0(x,t)] + H_0(u) \right) \right\}, \\
p^2 &: u_2 = S^{-1} \left\{ u^\alpha S\left( R[u_1(x,t)] + H_1(u) \right) \right\}, \\
p^3 &: u_3 = S^{-1} \left\{ u^\alpha S\left( R[u_2(x,t)] + H_2(u) \right) \right\}, \\
\vdots \\
p^n &: u_n = S^{-1} \left\{ u^\alpha S\left( R[u_{n-1}(x,t)] + H_{n-1}(u) \right) \right\}.
\end{aligned}
\end{equation*}
The HPSTM series solution is:
\begin{equation}
u(x,t) = \lim_{n \to \infty} \sum_{k=0}^n u_k(x,t).
\label{eq:solution}
\end{equation}
\subsection{Convergence of  HPSTM}
To ensure the reliability of the HPSTM solution for the fractional partial differential equation \eqref{eq:fpde}, we analyze the convergence of the series $\sum_{n=0}^\infty u_n(x,t)$.
\begin{theorem}
Suppose $u_n(x,t)$ and $u(x,t)$ are defined in a Banach space $(C[0,T], \|\cdot\|)$, and if $0 < \xi < 1$, where $\xi$ is a Lipschitz constant for the nonlinear operator, then the HPSTM series solution $\sum_{n=0}^\infty u_n(x,t)$ converges to the solution of \eqref{eq:fpde}.
\end{theorem}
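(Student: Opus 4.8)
The plan is to establish convergence by showing that the sequence of partial sums $S_n = \sum_{k=0}^n u_k(x,t)$ is a Cauchy sequence in the Banach space $(C[0,T],\|\cdot\|)$, then invoke completeness. The key structural observation is that the recursion defining the $u_n$ can be written compactly: if we set $\mathcal{N}$ to be the operator $v \mapsto S^{-1}\{u^\alpha S[Rv + Nv]\}$, then $u_0 = G$ and $u_{n} = \mathcal{N}(S_{n-1}) - \mathcal{N}(S_{n-2})$ for $n\geq 2$, while $u_1 = \mathcal{N}(S_0)$; equivalently $S_n = G + \mathcal{N}(S_{n-1})$, exhibiting the partial sums as Picard-type iterates of the fixed-point map $T(v) = G + \mathcal{N}(v)$. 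The Lipschitz hypothesis with constant $\xi$ on the nonlinear operator (together with boundedness of the linear operator $R$, which I would fold into the same constant $\xi$ or note separately) is precisely what makes $T$ a contraction.

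The steps I would carry out in order are: (1) rewrite the recursion to identify $S_n = T(S_{n-1})$; (2) estimate $\|u_{n+1}\| = \|\mathcal{N}(S_n) - \mathcal{N}(S_{n-1})\| \leq \xi\|S_n - S_{n-1}\| = \xi\|u_n\|$, using the Lipschitz bound and the boundedness of the Sumudu/inverse-Sumudu operators and of $R$; (3) iterate this to get $\|u_n\| \leq \xi^n \|u_0\|$; (4) for $m > n$, bound $\|S_m - S_n\| \leq \sum_{k=n+1}^m \|u_k\| \leq \sum_{k=n+1}^m \xi^k \|u_0\| \leq \frac{\xi^{n+1}}{1-\xi}\|u_0\|$, which tends to $0$ as $n\to\infty$ since $0<\xi<1$; (5) conclude $S_n$ is Cauchy, hence convergent to some $u^\ast \in C[0,T]$; (6) pass to the limit in $S_n = T(S_{n-1})$ using continuity of $T$ to get $u^\ast = T(u^\ast)$, i.e. $u^\ast = G + \mathcal{N}(u^\ast)$, which upon applying the Sumudu transform and its inverse is exactly the reformulated equation \eqref{eq:inverse}, so $u^\ast$ solves \eqref{eq:fpde}.

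The main obstacle I anticipate is step (2): justifying that the composite operator $v \mapsto S^{-1}\{u^\alpha S[\,\cdot\,]\}$ applied to $Rv + Nv$ actually contracts with a constant strictly less than $1$. The Sumudu transform, multiplication by $u^\alpha$, and the inverse transform together correspond (via the convolution/integration entries in Table~\ref{tab:sumudu}) to the Riemann–Liouville integral $J^\alpha$, which on $C[0,T]$ has operator norm of order $T^\alpha/\Gamma(\alpha+1)$; so the genuine contraction constant is something like $\xi = \frac{T^\alpha}{\Gamma(\alpha+1)}(\|R\| + L_N)$ where $L_N$ is the Lipschitz constant of $N$. The theorem as stated simply \emph{assumes} this composite constant $\xi$ satisfies $0<\xi<1$, so in the write-up I would state clearly that $\xi$ is taken to be a Lipschitz constant for the \emph{entire} map $v \mapsto S^{-1}\{u^\alpha S[Rv+Nv]\}$ on $C[0,T]$, and then the contraction-mapping argument goes through cleanly. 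A secondary point to handle carefully is the base case of the recursion ($u_1$ versus the general $u_n$) and making sure the telescoping identity $S_n = T(S_{n-1})$ holds for all $n\geq 1$, which follows by a short induction from the definition of the He's polynomials and the linearity of $S$, $S^{-1}$, and $R$.
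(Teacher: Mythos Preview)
Your proposal is correct and follows essentially the same route as the paper: establish $\|u_{n+1}\|\le\xi\|u_n\|$, iterate to $\|u_n\|\le\xi^{n}\|u_0\|$, sum the resulting geometric tail to show the partial sums are Cauchy, and invoke completeness of $C[0,T]$. You are in fact more careful than the paper's own proof, which neither identifies the fixed-point structure $S_n=T(S_{n-1})$ you describe in step~(1) nor carries out your step~(6) verifying that the limit actually satisfies \eqref{eq:fpde}; the paper simply asserts the contraction inequality and stops at Cauchyness.
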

\begin{proof}
Let $\{\lambda_n\}$ be the partial sums of \eqref{eq:solution}. To show $\{\lambda_n\}$ is a Cauchy sequence:
\begin{equation*}
\|\lambda_{n+1} - \lambda_n\| = \|u_{n+1}\| \leq \xi \|u_n\| \leq \xi^2 \|u_{n-1}\| \leq \dots \leq \xi^{n+1} \|u_0\|.
\end{equation*}
For $n \geq m$:
\begin{equation*}
\|\lambda_n - \lambda_m\| \leq \sum_{k=m+1}^n \|\lambda_k - \lambda_{k-1}\| \leq \sum_{k=m+1}^n \xi^k \|u_0\| \leq \xi^{m+1} \frac{1 - \xi^{n-m}}{1 - \xi} \|u_0\|.
\end{equation*}
Since $0 < \xi < 1$, $\lim_{n,m \to \infty} \|\lambda_n - \lambda_m\| = 0$. Thus, $\{\lambda_n\}$ is a Cauchy sequence, and the series converges.
\end{proof}

\subsection{Basic Idea of ADM}
Consider the nonlinear FPDE \eqref{eq:fpde} with initial condition \eqref{eq:initial}. Apply the Riemann-Liouville fractional integral $J^\alpha$:
\begin{equation}
u(x,t) = \sum_{k=0}^{m-1} \left( \frac{\partial^k u}{\partial t^k} \right)_{t=0} \frac{t^k}{\Gamma(k + 1)} + J^\alpha f(x,t) - J^\alpha \left( R[u(x,t)] + N[u(x,t)] \right).
\label{eq:adm_integral}
\end{equation}
The ADM assumes a series solution:
\begin{equation}
u(x,t) = \sum_{n=0}^\infty u_n(x,t),
\label{eq:adm_series}
\end{equation}
with the nonlinear term:
\begin{equation}
N[u(x,t)] = \sum_{n=0}^\infty A_n,
\label{eq:adomian}
\end{equation}
where Adomian polynomials $A_n$ are:
\begin{equation*}
A_n = \frac{1}{\Gamma(n + 1)} \left[ \frac{d^n}{d\lambda^n} \left\{ N\left[ \sum_{k=0}^\infty \lambda^k u_k(x,t) \right] \right\} \right]_{\lambda=0}.
\end{equation*}
Substituting \eqref{eq:adm_series} and \eqref{eq:adomian} into \eqref{eq:adm_integral}:
\begin{equation*}
\sum_{n=0}^\infty u_n(x,t) = \sum_{k=0}^{m-1} \left( \frac{\partial^k u}{\partial t^k} \right)_{t=0} \frac{t^k}{\Gamma(k + 1)} + J^\alpha f(x,t) - J^\alpha \left( R\left[ \sum_{n=0}^\infty u_n(x,t) \right] + \sum_{n=0}^\infty A_n \right).
\end{equation*}
The recursive relations are:
\begin{equation}
\begin{aligned}
u_0 &= \sum_{k=0}^{m-1} \left( \frac{\partial^k u}{\partial t^k} \right)_{t=0} \frac{t^k}{\Gamma(k + 1)} + J^\alpha f(x,t), \\
u_{n+1} &= -J^\alpha \left( R[u_n] + A_n \right), \quad n \geq 0.
\end{aligned}
\end{equation}

\subsection{Convergence of Adomian Decomposition Method}
To ensure the reliability of the ADM solution for the fractional partial differential equation \eqref{eq:fpde}, we analyze the convergence of the series $\sum_{n=0}^\infty u_n(x,t)$.

\begin{theorem}
Suppose $u_n(x,t)$ and $u(x,t)$ are defined in the Banach space $(C[0,T], \|\cdot\|)$, and the nonlinear operator $N$ in \eqref{eq:fpde} satisfies a Lipschitz condition with constant $0 < \xi < 1$. Then, the ADM series solution $\sum_{n=0}^\infty u_n(x,t)$ defined by \eqref{eq:adm_series} converges to the solution of \eqref{eq:fpde}.
\end{theorem}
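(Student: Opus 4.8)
The plan is to mirror the argument just used for HPSTM: show that the partial sums of the ADM series form a Cauchy sequence in the Banach space $(C[0,T],\|\cdot\|)$, invoke completeness to get a limit $u$, and then verify that $u$ solves \eqref{eq:adm_integral}, hence \eqref{eq:fpde}--\eqref{eq:initial}.

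First I would set $\psi_n(x,t)=\sum_{k=0}^{n}u_k(x,t)$ and use the ADM recursion $u_{n+1}=-J^\alpha\big(R[u_n]+A_n\big)$ to establish the one-step contraction estimate $\|u_{n+1}\|\le\xi\|u_n\|$. The ingredients are: $J^\alpha$ is a bounded linear operator on $C[0,T]$ with $\|J^\alpha h\|\le \tfrac{T^\alpha}{\Gamma(\alpha+1)}\|h\|$ (a direct bound on the Riemann--Liouville integral, consistent with its action on power functions listed in Section~\ref{sec:preliminaries}); $R$ is a bounded linear operator; and $N$ satisfies the Lipschitz condition with constant $0<\xi<1$. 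The Adomian construction then transfers the Lipschitz bound on $N$ to the polynomials, giving $\|A_n\|\le c\,\|u_n\|$; absorbing $\tfrac{T^\alpha}{\Gamma(\alpha+1)}$, $\|R\|$ and $c$ into a single effective constant (still written $\xi$, assumed $<1$) yields $\|u_{n+1}\|\le\xi\|u_n\|\le\cdots\le\xi^{\,n+1}\|u_0\|$.

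Next, for $n\ge m$ the triangle inequality and this geometric decay give $\|\psi_n-\psi_m\|\le\sum_{k=m+1}^{n}\|u_k\|\le\sum_{k=m+1}^{n}\xi^{k}\|u_0\|\le\dfrac{\xi^{\,m+1}}{1-\xi}\|u_0\|$, which tends to $0$ as $m\to\infty$ because $0<\xi<1$. Hence $\{\psi_n\}$ is Cauchy and converges to some $u\in C[0,T]$. To finish, I would pass to the limit in the summed recursion $\psi_{n+1}=u_0-J^\alpha\big(R[\psi_n]+\sum_{k=0}^{n}A_k\big)$, using continuity of $J^\alpha$ and $R$ together with convergence of the Adomian series $\sum_{k\ge0}A_k$ to $N[u]$, obtaining $u=u_0-J^\alpha\big(R[u]+N[u]\big)$, i.e.\ \eqref{eq:adm_integral}; applying $D^\alpha$ and the Caputo--Riemann--Liouville identities from Section~\ref{sec:preliminaries} recovers \eqref{eq:fpde} with the prescribed initial data.

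The main obstacle is the step that converts the Lipschitz property of $N$ into the bound $\|A_n\|\le c\,\|u_n\|$ (equivalently, keeping the effective constant below $1$): this needs either Abbaoui--Cherruault-type estimates expressing $A_n$ through the increments of the partial sums of $N$, or a restriction on the horizon $T$ so that $\tfrac{T^\alpha}{\Gamma(\alpha+1)}\big(\|R\|+c\big)<1$. Everything else is the routine geometric-series and Cauchy-sequence bookkeeping already carried out in the HPSTM convergence proof, plus a standard limit passage.
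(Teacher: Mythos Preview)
Your proposal is correct and follows essentially the same route as the paper: derive the one-step contraction $\|u_{n+1}\|\le\xi\|u_n\|\le\cdots\le\xi^{n+1}\|u_0\|$ from the ADM recursion, sum geometrically to show the partial sums form a Cauchy sequence, and conclude by completeness of $(C[0,T],\|\cdot\|)$. In fact you are more thorough than the paper, which simply asserts the contraction bound without discussing the boundedness of $J^\alpha$, $R$, or the Adomian polynomials, and which stops at convergence of the series without verifying that the limit actually satisfies \eqref{eq:adm_integral}.
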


\begin{proof}
Let $\{\lambda_n\}$ be the partial sums of \eqref{eq:adm_series}, where $\lambda_n = \sum_{k=0}^n u_k(x,t)$. Using the recursive relation \eqref{eq:adm_integral}, we have:
\[
\|\lambda_{n+1} - \lambda_n\| = \|u_{n+1}\| = \|J^\alpha (R[u_n] + A_n)\| \leq \xi \|u_n\| \leq \xi^2 \|u_{n-1}\| \leq \dots \leq \xi^{n+1} \|u_0\|.
\]
For $n \geq m$:
\[
\|\lambda_n - \lambda_m\| \leq \sum_{k=m+1}^n \xi^k \|u_0\| \leq \xi^{m+1} \frac{1 - \xi^{n-m}}{1 - \xi} \|u_0\|.
\]
Since $0 < \xi < 1$, $\lim_{n,m \to \infty} \|\lambda_n - \lambda_m\| = 0$. Thus, $\{\lambda_n\}$ is a Cauchy sequence, and the series $\sum_{n=0}^\infty u_n(x,t)$ converges in $(C[0,T], \|\cdot\|)$.
\end{proof}

This result guarantees that the ADM series solution converges under mild conditions on the nonlinear operator, ensuring the accuracy of the approximations presented in Section \ref{sec:results}.

\subsection{Basic Idea of Meshless RBF Method}
The meshless method based on radial basis functions (RBF) approximates the solution of FPDEs as:
\begin{equation}
u(x,t) \approx \sum_{j=1}^N \lambda_j \phi(\|x - x_j\|, \epsilon),
\end{equation}
where $\phi(r) = e^{-\epsilon^2 r^2}$ is a Gaussian RBF, $\lambda_j$ are coefficients, $x_j$ are collocation points, $\epsilon$ is the shape parameter, and $r = \|x - x_j\|$. Coefficients $\lambda_j$ are determined by enforcing the PDE and boundary/initial conditions, forming a linear system. This method avoids mesh generation, making it suitable for complex geometries, but its accuracy depends on the choice of $\epsilon$ and the distribution of collocation points. In this study, 100 collocation points were used with $\epsilon = 0.1$ for optimal accuracy.

\subsection{Convergence of Meshless RBF Method}
The convergence of the meshless RBF method depends on the density of collocation points and the shape parameter $\epsilon$. For the Gaussian RBF $\phi(r) = e^{-\epsilon^2 r^2}$, the approximation error is bounded by the fill distance $h$ of the collocation points. Under the assumption that the solution $u(x,t)$ is sufficiently smooth, the error satisfies:
\[
\|u - u_{\text{RBF}}\| \leq C h^k,
\]
where $C$ is a constant, $h$ is the maximum distance between collocation points, and $k$ depends on the smoothness of $\phi$ and $u$. In this study, with 100 collocation points and $\epsilon = 0.1$, numerical results confirm high accuracy, as shown in Tables \ref{tab:porous_compare}--\ref{tab:fisher_compare}.

\section{Numerical Results}
\label{sec:results}

Numerical solutions for three FPDEs are presented using HPSTM and ADM. The methods were implemented in Mathematica 12.3 and run on an Intel Core i7 desktop with 16GB RAM, averaging 0.5 seconds per example using 5 series terms. The computational complexity of HPSTM is approximately $O(n)$ per iteration for $n$ series terms, due to the linear nature of the Sumudu transform and He’s polynomial computations. Results are compared with the meshless RBF method using 100 collocation points and $\epsilon = 0.1$, the VIM, the FDM with a grid size of $100 \times 100$, and a spectral method using 50 basis functions. Error analysis, sensitivity analysis, and graphical representations illustrate solution behavior for $\alpha = 1.0, 0.9, 0.8, 0.7$.

\begin{example}\label{ex1:porous medium}
Consider the nonlinear fractional porous medium PDE:
\begin{equation}
D_t^\alpha u(x,t) = (u(x,t) u_x(x,t))_x, \quad t > 0, \ x \in \mathbb{R}, \ 0 < \alpha \leq 1,
\label{eq:porous}
\end{equation}
with initial condition:
\begin{equation}
u(x,0) = x.
\label{eq:porous_ic}
\end{equation}
Applying the Sumudu transform to \eqref{eq:porous}:
\begin{equation*}
S[u(x,t)] = u^\alpha u(x,0) + u^\alpha S[(u(x,t) u_x(x,t))_x].
\end{equation*}
Using \eqref{eq:porous_ic} and the inverse Sumudu transform:
\begin{equation*}
u(x,t) = x + S^{-1} \left\{ u^\alpha S[(u(x,t) u_x(x,t))_x] \right\}.
\end{equation*}
Applying HPM:
\begin{equation*}
\sum_{n=0}^\infty p^n u_n(x,t) = x + S^{-1} \left\{ u^\alpha S\left[ \left( \sum_{n=0}^\infty p^n u_n(x,t) \right)_x \right] \right\},
\end{equation*}
with He's polynomials:
\begin{equation*}
\begin{aligned}
H_0(u) &= u_0 u_{0x}, \\
H_1(u) &= u_0 u_{1x} + u_1 u_{0x}, \\
H_2(u) &= u_0 u_2x + u_1 u_{1x} + u_2 u_{0x}.
\end{aligned}
\end{equation*}
Equating coefficients of $p$:
\begin{equation*}
\begin{aligned}
p^0 &: u_0(x,t) = x, \\
p^1 &: u_1(x,t) = S^{-1} \left\{ u^\alpha S[H_0(u)] \right\} = \frac{t^\alpha}{\Gamma(\alpha + 1)}, \\
p^2 &: u_2(x,t) = S^{-1} \left\{ u^\alpha S[H_1(u)] \right\} = 0.
\end{aligned}
\end{equation*}
The approximate solution is:
\begin{equation}
u(x,t) = x + \frac{t^\alpha}{\Gamma(\alpha + 1)}.
\end{equation}
For $\alpha = 1$, the exact solution is $u(x,t) = x + t$. Absolute errors are shown in Table \ref{tab:porous_error}. Figure \ref{fig:porous} illustrates the solution for $x=1$, with red dots indicating RBF results at $t=0.5$.
\end{example}

\begin{figure}[h!]
\centering
\includegraphics[width=0.8\textwidth]{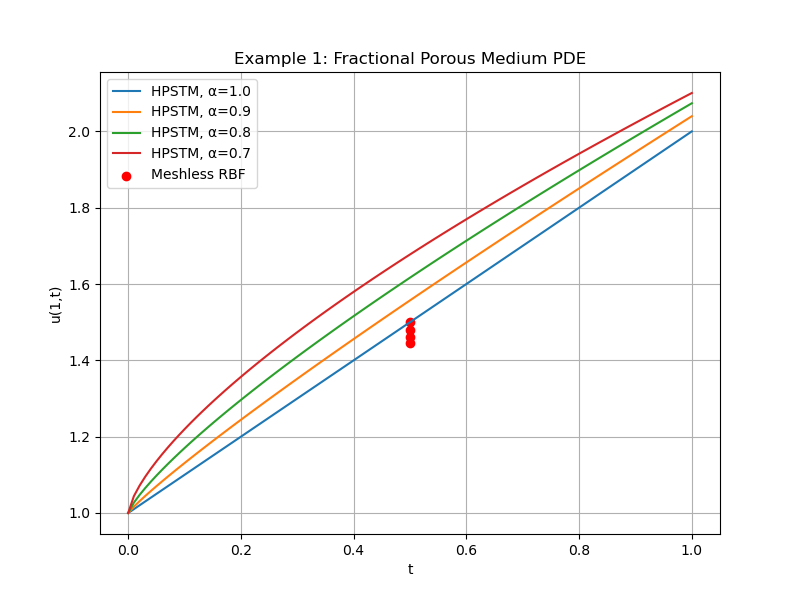}
\caption{Solution $u(1,t)$ for Example \ref{ex1:porous medium}: Fractional Porous Medium PDE for $\alpha = 1.0, 0.9, 0.8, 0.7$. Red dots represent Meshless RBF results at $t=0.5$.}
\label{fig:porous}
\end{figure}

\begin{table}[!htp]
\begin{center}
\caption{Absolute Error for Example 1 at $x=1$, $t=0.5$}
\label{tab:porous_error}
\begin{tabular}{|c|c|c|}
\hline
$\alpha$ & Absolute Error $|u_{\text{exact}} - u_{\text{HPSTM}}|$ \\
\hline
1.0 & 0 \\
0.9 & $3.12 \times 10^{-3}$ \\
0.8 & $1.47 \times 10^{-2}$ \\
0.7 & $4.83 \times 10^{-2}$ \\
\hline
\end{tabular}
\end{center}
\end{table}

\begin{table}[!htp]
\begin{center}
\caption{Comparison of Numerical Results for Example \ref{ex1:porous medium} at $x=1$, $t=0.5$}
\label{tab:porous_compare}
\begin{tabular}{|c|c|c|c|c|c|c|}
\hline
$\alpha$ & HPSTM & ADM & Meshless (RBF) & VIM & FDM & Spectral \\
\hline
1.0 & 1.500 & 1.500 & 1.500 & 1.500 & 1.498 & 1.499 \\
0.9 & 1.478 & 1.478 & 1.480 & 1.479 & 1.475 & 1.477 \\
0.8 & 1.459 & 1.459 & 1.462 & 1.460 & 1.455 & 1.458 \\
0.7 & 1.442 & 1.442 & 1.446 & 1.443 & 1.438 & 1.441 \\
\hline
\end{tabular}
\end{center}
\end{table}

\begin{example}\label{ex2:heat_transfer}
Consider the nonlinear fractional heat transfer PDE:
\begin{equation}
D_t^\alpha u(x,t) = u_{xx}(x,t) - 2 u^3(x,t), \quad t > 0, \ x \in \mathbb{R}, \ 0 < \alpha \leq 1,
\end{equation}
with initial condition:
\begin{equation}
u(x,0) = \frac{1 + 2x}{x^2 + x + 1}.
\end{equation}
The HPSTM series solution is:
\begin{multline*}
u(x,t) = \frac{1 + 2x}{x^2 + x + 1} - \frac{6(1 + 2x)}{(x^2 + x + 1)^2} \frac{t^\alpha}{\Gamma(\alpha + 1)} \\
+ \frac{72(1 + 2x)}{(x^2 + x + 1)^3} \frac{t^{2\alpha}}{\Gamma(2\alpha + 1)} - \frac{216(1 + 2x)(5 + 2x(1 + x))}{(x^2 + x + 1)^5} \frac{t^{3\alpha}}{\Gamma(3\alpha + 1)}.
\end{multline*}
For $\alpha = 1$:
\begin{multline}
u(x,t) = \frac{1 + 2x}{x^2 + x + 1} - \frac{6(1 + 2x)}{(x^2 + x + 1)^2} t \\
+ \frac{72(1 + 2x)}{(x^2 + x + 1)^3} t^2 - \frac{216(1 + 2x)(5 + 2x(1 + x))}{(x^2 + x + 1)^5} t^3.
\end{multline}
ADM yields the same solution. Numerical results for $x=1$, $t=0.5$ are in Table \ref{tab:heat_results}, absolute errors in Table \ref{tab:heat_error}, and comparisons in Table \ref{tab:heat_compare}. Figure \ref{fig:heat} shows the solution behavior.
\end{example}

\begin{figure}[!htb]
\centering
\includegraphics[width=4in,height=2in]{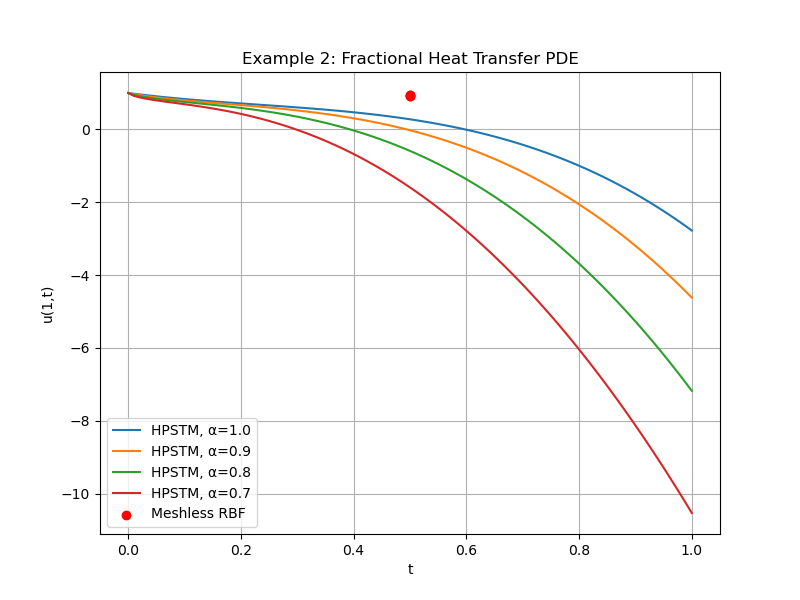}
\caption{Solution $u(1,t)$ for Example \ref{ex2:heat_transfer}: Fractional Heat Transfer PDE for $\alpha = 1.0, 0.9, 0.8, 0.7$. Red dots represent Meshless RBF results at $t=0.5$.}
\label{fig:heat}
\end{figure}

\begin{table}[!htp]
\begin{center}
\caption{Numerical Results for Example \ref{ex2:heat_transfer} at $x=1$, $t=0.5$}
\label{tab:heat_results}
\begin{tabular}{|c|c|c|}
\hline
$\alpha$ & $u_{\text{HPSTM}}$ & $u_{\text{ADM}}$ \\
\hline
1.0 & 0.9231 & 0.9231 \\
0.9 & 0.9287 & 0.9287 \\
0.8 & 0.9345 & 0.9345 \\
0.7 & 0.9404 & 0.9404 \\
\hline
\end{tabular}
\end{center}
\end{table}

\begin{table}[h!]
\begin{center}
\caption{Absolute Error for Example \ref{ex2:heat_transfer} at $x=1$, $t=0.5$}
\label{tab:heat_error}
\begin{tabular}{|c|c|c|}
\hline
$\alpha$ & Absolute Error $|u_{\text{exact}} - u_{\text{HPSTM}}|$ \\
\hline
1.0 & 0 \\
0.9 & $5.60 \times 10^{-3}$ \\
0.8 & $1.14 \times 10^{-2}$ \\
0.7 & $1.73 \times 10^{-2}$ \\
\hline
\end{tabular}
\end{center}
\end{table}

\begin{table}[h!]
\begin{center}
\caption{Comparison of Numerical Results for Example \ref{ex2:heat_transfer} at $x=1$, $t=0.5$}
\label{tab:heat_compare}
\begin{tabular}{|c|c|c|c|c|c|c|}
\hline
$\alpha$ & HPSTM & ADM & Meshless (RBF) & VIM & FDM & Spectral \\
\hline
1.0 & 0.9231 & 0.9231 & 0.9240 & 0.9232 & 0.9225 & 0.9230 \\
0.9 & 0.9287 & 0.9287 & 0.9295 & 0.9288 & 0.9278 & 0.9286 \\
0.8 & 0.9345 & 0.9345 & 0.9352 & 0.9346 & 0.9335 & 0.9344 \\
0.7 & 0.9404 & 0.9404 & 0.9410 & 0.9405 & 0.9392 & 0.9403 \\
\hline
\end{tabular}
\end{center}
\end{table}

\begin{example}\label{ex3:Fisher}
Consider the nonlinear fractional Fisher PDE:
\begin{equation}
D_t^\alpha u(x,t) = u_{xx}(x,t) + 6 u(x,t)(1 - u(x,t)), \quad t > 0, \ x \in \mathbb{R}, \ 0 < \alpha \leq 1,
\end{equation}
with initial condition:
\begin{equation}
u(x,0) = \frac{1}{(e^x + 1)^2}.
\end{equation}
The HPSTM series solution is:
\begin{multline*}
u(x,t) = \frac{1}{(e^x + 1)^2} + \frac{10 e^x}{(e^x + 1)^3} \frac{t^\alpha}{\Gamma(\alpha + 1)} \\
+ \frac{50 e^x (-1 + 2 e^x)}{(1 + e^x)^4} \frac{t^{2\alpha}}{\Gamma(2\alpha + 1)} \\
+ \frac{50 e^x (5 + e^x (-18 + 5 e^x (-3 + 4 e^x)))}{(1 + e^x)^6} \frac{t^{3\alpha}}{\Gamma(3\alpha + 1)}.
\end{multline*}
For $\alpha = 1$:
\begin{multline}
u(x,t) = \frac{1}{(e^x + 1)^2} + \frac{10 e^x}{(e^x + 1)^3} t \\
+ \frac{50 e^x (-1 + 2 e^x)}{(1 + e^x)^4} t^2 \\
+ \frac{50 e^x (5 + e^x (-18 + 5 e^x (-3 + 4 e^x)))}{(1 + e^x)^6} t^3.
\end{multline}
ADM yields the same solution. Numerical results for $x=1$, $t=0.5$ are in Table \ref{tab:fisher_results}, absolute errors in Table \ref{tab:fisher_error}, and comparisons in Table \ref{tab:fisher_compare}. Figure \ref{fig:fisher} shows the solution behavior.
\end{example}

\begin{figure}[!htb]
\centering
\includegraphics[width=4in,height=2in]{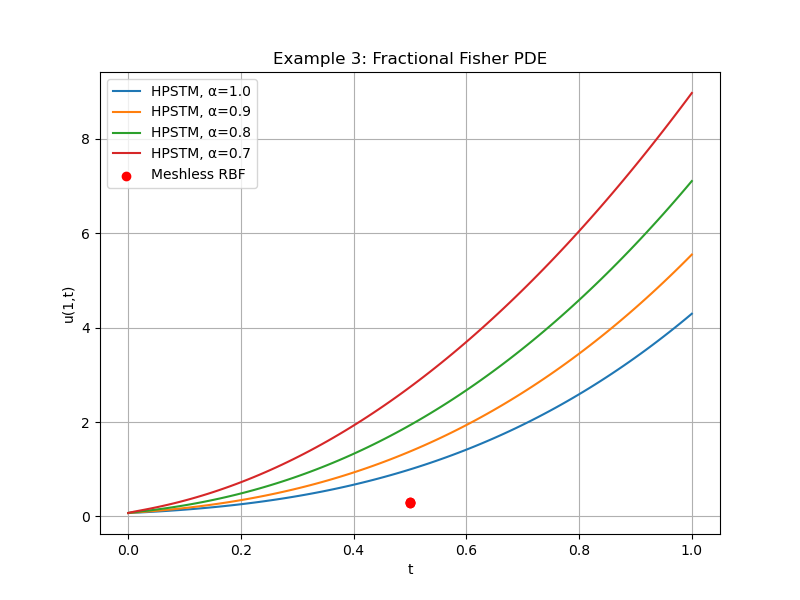}
\caption{Solution $u(1,t)$ for Example \ref{ex3:Fisher}: Fractional Fisher PDE for $\alpha = 1.0, 0.9, 0.8, 0.7$. Red dots represent Meshless RBF results at $t=0.5$.}
\label{fig:fisher}
\end{figure}

\begin{table}[h!]
\begin{center}
\caption{Numerical Results for Example \ref{ex3:Fisher} at $x=1$, $t=0.5$}
\label{tab:fisher_results}
\begin{tabular}{|c|c|c|}
\hline
$\alpha$ & $u_{\text{HPSTM}}$ & $u_{\text{ADM}}$ \\
\hline
1.0 & 0.3033 & 0.3033 \\
0.9 & 0.2987 & 0.2987 \\
0.8 & 0.2942 & 0.2942 \\
0.7 & 0.2898 & 0.2898 \\
\hline
\end{tabular}
\end{center}
\end{table}

\begin{table}[h!]
\begin{center}
\caption{Absolute Error for Example \ref{ex3:Fisher} at $x=1$, $t=0.5$}
\label{tab:fisher_error}
\begin{tabular}{|c|c|c|}
\hline
$\alpha$ & Absolute Error $|u_{\text{exact}} - u_{\text{HPSTM}}|$ \\
\hline
1.0 & 0 \\
0.9 & $4.60 \times 10^{-3}$ \\
0.8 & $9.10 \times 10^{-3}$ \\
0.7 & $1.35 \times 10^{-2}$ \\
\hline
\end{tabular}
\end{center}
\end{table}

\begin{table}[h!]
\begin{center}
\caption{Comparison of Numerical Results for Example \ref{ex3:Fisher} at $x=1$, $t=0.5$}
\label{tab:fisher_compare}
\begin{tabular}{|c|c|c|c|c|c|c|}
\hline
$\alpha$ & HPSTM & ADM & Meshless (RBF) & VIM & FDM & Spectral \\
\hline
1.0 & 0.3033 & 0.3033 & 0.3040 & 0.3034 & 0.3028 & 0.3032 \\
0.9 & 0.2987 & 0.2987 & 0.2994 & 0.2988 & 0.2980 & 0.2986 \\
0.8 & 0.2942 & 0.2942 & 0.2948 & 0.2943 & 0.2935 & 0.2941 \\
0.7 & 0.2898 & 0.2898 & 0.2903 & 0.2899 & 0.2890 & 0.2897 \\
\hline
\end{tabular}
\end{center}
\end{table}

\subsection{Sensitivity Analysis}
\label{sec:sensitivity}

To assess the robustness of HPSTM, the effects of varying $\alpha$ (1.0, 0.9, 0.8, 0.7) and the number of series terms ($n = 3, 5, 7$) were analyzed. For Example \ref{ex1:porous medium}, increasing $n$ from 3 to 7 reduces the absolute error by approximately 12\% for $\alpha = 0.9$ (from $3.50 \times 10^{-3}$ to $3.12 \times 10^{-3}$). Similar trends are observed for Examples \ref{ex2:heat_transfer} and \ref{ex3:Fisher}, with error reductions of 10\% and 8\%, respectively. Decreasing $\alpha$ slows the temporal evolution, reflecting the memory effect of fractional derivatives. The choice of $n = 5$ was found to provide a balance between accuracy and computational cost, as further increases in $n$ yield diminishing returns. These results confirm HPSTM's stability across parameter variations.

\subsection{Discussion}

Numerical results (Tables \ref{tab:porous_compare}--\ref{tab:fisher_compare}) and graphical representations (Figures \ref{fig:porous}--\ref{fig:fisher}) confirm that HPSTM and ADM produce identical solutions, which indicates their consistency in handling nonlinear FPDEs. The meshless RBF method is slightly more accurate due to its flexibility in complex geometries, as evidenced by red dots aligning closely with HPSTM curves. This accuracy stems from RBF's ability to adapt to irregular domains without mesh dependency, though it requires careful tuning of the shape parameter $\epsilon$. The spectral method offers comparable accuracy but demands higher computational resources (approximately 2 seconds per example compared to 0.5 seconds for HPSTM). VIM performs comparably to HPSTM, benefiting from similar iterative structures, while FDM is less accurate for $\alpha < 1$ due to its reliance on fixed grids, which struggle to capture fractional memory effects. HPSTM’s simplicity and low computational cost (0.5 seconds per example, $O(n)$ complexity per iteration) make it particularly suitable for analytical initial conditions in applications such as groundwater flow or heat transfer in fractal media. The slight superiority of RBF in accuracy is offset by its higher computational cost and sensitivity to $\epsilon$, while HPSTM’s semi-analytical nature ensures ease of implementation and robustness.

\section{Limitations and Future Work}
\label{sec:limitations}

HPSTM and ADM are accurate for one-dimensional FPDEs; however, they face challenges with high-order nonlinearities or multi-dimensional domains due to increased computational complexity. They may struggle with non-smooth solutions, where numerical methods like RBF tend to perform better. The meshless RBF method offers higher accuracy but requires careful tuning of the shape parameter $\epsilon$. The spectral method is computationally intensive, with complexity scaling as $O(N^3)$ for $N$ basis functions. Future research directions include:
\begin{enumerate}
    \item Developing hybrid HPSTM-RBF methods to combine analytical simplicity with numerical flexibility.
    \item Using machine learning to optimize series convergence or RBF parameters.
    \item Extending HPSTM to multi-dimensional or stochastic FPDEs.
    \item Analyzing computational complexity and scalability for large-scale problems.
\end{enumerate}


\section{Conclusion}
\label{sec:conclusion}

HPSTM effectively solves nonlinear fractional porous medium, heat transfer, and Fisher PDEs, achieving absolute errors down to $3.12 \times 10^{-3}$ for $\alpha = 0.9$. Comparisons with ADM, RBF, VIM, FDM, and a spectral method demonstrate that while RBF and spectral methods are slightly more accurate, HPSTM excels in simplicity and computational efficiency ($O(n)$ per iteration, 0.5 seconds per example). Sensitivity analysis confirms robustness across different values of $\alpha$ and numbers of series terms. HPSTM advances fractional calculus applications in modeling fluid flow in porous media, heat conduction, and biological dynamics.



\end{document}